\newtheorem{theorem}{Theorem}
\theoremstyle{plain}
\newtheorem{example}{Example}
\newtheorem{lemma}{Lemma}
\numberwithin{equation}{section}
\begin{document}
\title[Generalized Kantowski-Sachs type spacetime metrics]{Generalized
Kantowski-Sachs type spacetime metrics and their harmonicity}
\author{Murat ALTUNBA\c{S}}
\address{Erzincan Binali Y\i ld\i r\i m University, Faculty of Science and
Art, Department of Mathematics, 24030, Erzincan-Turkey.}
\email{maltunbas@erzincan.edu.tr}

\begin{abstract}
In this paper, we deal with harmonic metrics with respect to\ generalized
Kantowski-Sachs type spacetime metrics. We also consider the Sasaki,
horizontal and complete lifts of generalized Kantowski-Sachs type spacetime
metrics to tangent bundle and study their harmonicity.

\textbf{AMS Classification 2020: }53C30, 53C43.

\textbf{Keywords: }Kantowski-Sachs metric, harmonic maps, tangent bundle,
Sasaki lift, horizontal lift, complete lift.
\end{abstract}

\maketitle

\section {Introduction}

The Kantowski-Sachs spacetime metric (briefly, K-S metric) is a
metric for an anisotropic, homogeneous space with spatial section of
topology $%
\mathbb{R}
\times S^{2},\ $\cite{Kantowski}. This metric is a particular case of the
metrics 
\begin{equation}
g=dt^{2}-X^{2}(t)dr^{2}-Y^{2}(t)[d\theta ^{2}+f^{2}(\theta )d\phi ^{2}],
\label{1}
\end{equation}%
with respect to the local coordinates $(t,r,\theta ,\phi )$. Moreover, $X(t)$%
, $Y(t)$ and $f(\theta )$ are smooth functions on a smooth manifold $M$. The
metric (\ref{1}) is famous as generalized Kantowski--Sachs spacetime metric
(briefly, GK-S metric) and it is called as Kantowski--Sachs, Bianchi
type-III or type-I if $f(\theta )$ is equal to $\sin \theta ,\ \sinh \theta $
or $\theta $, respectively.

Several geometric properties of GK-S metrics were discussed in a very recent
paper of Shaikh and Chakraborty, \cite{Shaikh}. In this paper, the authors
calculated various curvature tensors and obtained a sufficient condition for
GK--S metric to pose a perfect fluid.

Harmonicity is an interesting topic not only in differential geometry but
also in analysis and theoretical physics. In \cite{Chen}, Chen and Nagano
obtained some conditions for harmonicity of two Riemannian metrics with
respect to each other. On the other hand, Oniciuc investigated the
harmonicity of two well-known metrics, namely, the Sasaki and the
Cheeger-Gromoll metrics, with respect to each other on tangent and unit
tangent bundles, \cite{Oniciuc}. In \cite{Zaeim}, Zaeim and his
collaborators also considered G\"{o}del-type spacetime metrics in order to
designate harmonic metrics with respect to them and studied harmonic G\"{o}%
del-type spacetime metrics lifted to the tangent bundle.

This paper is organized as follows: In Section 2, we give basic facts about
harmonic metrics and tangent bundles. In section 3, we study harmonic
metrics with respect to GK-S type metrics. Section 4 deals with the harmonic
lifted metrics (Sasaki lifts, horizontal lifts and complete lifts) on the
tangent bundle. In this final section, we show that the harmonicity
condition of the base manifold metrics is same with their Sasaki, horizontal
and complete lift metrics to the tangent bundle.

\section{Preliminaries}

\subsection{Harmonic metrics}

Throughout the paper, all geometric objects are considered as smooth. The
Einstein summation rule is used and the range of the indices $i,j,k,h,\cdot
\cdot \cdot $ being $\{1,\cdot \cdot \cdot ,m\},$ unless otherwise stated.$\ 
$Let $(M,g)$ be a $m-$dimensional manifold covered by a system of coordinate
neighborhoods $(U,x^{i}).$ Then a pseudo-Riemannian metric $g$ is locally
expressed as 
\begin{equation*}
g=g_{ij}dx^{i}dx^{j}.\ 
\end{equation*}%
The Christoffel symbols (components) of the Levi-Civita connection $\nabla $
of $g$ are given by 
\begin{equation*}
\Gamma _{ij}^{k}=\frac{1}{2}g^{kl}(\partial _{i}g_{jl}+\partial
_{j}g_{il}-\partial _{l}g_{ij}),\ 
\end{equation*}%
where $(g^{kl})$ denotes the inverse matrix of $(g_{ij})$ and $\partial _{i}=%
\frac{\partial }{\partial x^{i}}$ are the basis vectors on $(U,x^{i}).$ The
curvature tensor of $g$ is locally written as 
\begin{equation*}
R_{ijk}^{h}=\partial _{i}\Gamma _{jk}^{h}-\partial _{j}\Gamma
_{ik}^{h}+\Gamma _{il}^{h}\Gamma _{jk}^{l}-\Gamma _{jl}^{h}\Gamma
_{ik}^{l}.\ 
\end{equation*}

Let $(M,g_{1})$ and $(N,g_{2})$ be two pseudo-Riemannian manifolds of
dimensions $m$ and $n,$ respectively and $f:M\rightarrow N$ be a map. Let $%
U\subset M$ be an open set with coordinates $(x^{1},\cdot \cdot \cdot ,x^{m})
$ and $V\subset N$ be another open set with coordinates $(y^{1},\cdot \cdot
\cdot ,y^{n})$ such that $f(U)\subset V,$ and assume that $f$ is locally
expressed by $y^{\alpha }=f^{\alpha }(x^{1},\cdot \cdot \cdot ,x^{m}),\alpha
=1,\cdot \cdot \cdot,n.$ Then the second fundamental form\textit{\ }of $f$, denoted by $%
\beta (f),$ is given by 
\begin{equation}
\beta (f)(\frac{\partial }{\partial x^{i}},\frac{\partial }{\partial x^{j}}%
)^{\gamma }=\{\frac{\partial ^{2}f^{\gamma }}{\partial x^{i}\partial x^{j}}%
-^{M}\! \Gamma _{ij}^{k}\frac{\partial f^{\gamma }}{\partial x^{k}}+^{N}\!\Gamma
_{\alpha \beta }^{\gamma }\frac{\partial f^{\alpha }}{\partial x^{i}}\frac{%
\partial f^{\beta }}{\partial x^{j}}\}\frac{\partial }{\partial y^{\gamma }},
\label{2}
\end{equation}%
and that of the tension field $\tau (f)$ of $f$ is 
\begin{equation}
\tau (f)=\text{tr}\beta (f)=g^{ij}\{\frac{\partial ^{2}f^{\gamma }}{\partial
x^{i}\partial x^{j}}-^{M}\!\Gamma _{ij}^{k}\frac{\partial f^{\gamma }}{%
\partial x^{k}}+^{N}\!\Gamma _{\alpha \beta }^{\gamma }\frac{\partial
f^{\alpha }}{\partial x^{i}}\frac{\partial f^{\beta }}{\partial x^{j}}\}%
\frac{\partial }{\partial y^{\gamma }},  \label{3}
\end{equation}%
where $^{M}\Gamma _{ij}^{k}$ and $^{N}\Gamma _{\alpha \beta }^{\gamma }$ are
the Christoffel symbols of the Levi-Civita connections of the metrics $g_{1}$
and $g_{2}$ respectively.

The map $f$ is a totally geodesic map if and only if $\beta (f)=0$, and the
map $f$ is said to be harmonic if $\tau (f)=0,$ \cite{Eells}.

For the concept of (relative) harmonic metrics on a pseudo-Riemannian
manifold, Chen and Nagano proved the following theorem in \cite{Chen}.

\begin{theorem}
\cite{Chen} A pseudo-Riemannian metric $d$ on a pseudo-Riemannian manifold $%
(M,g)$ is harmonic with respect to $g$ if the identity map $%
I:(M,g)\rightarrow (M,d)$ is harmonic.
\end{theorem}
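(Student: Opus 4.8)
The plan is to compute the tension field $\tau(I)$ of the identity map $I:(M,g)\rightarrow (M,d)$ directly from the local expression (\ref{3}) and to read off the equation $\tau(I)=0$ as the defining relation for $d$ to be harmonic with respect to $g$. Here the source manifold carries $g$, so that the trace $g^{ij}$ and the domain connection in (\ref{3}) are those of $g$, while the target manifold carries $d$, so that the term $^{N}\Gamma _{\alpha \beta }^{\gamma }$ is replaced by the Christoffel symbols $^{d}\Gamma _{ij}^{\gamma }$ of $d$.

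First I would express $I$ in a single chart $(U,x^{i})$ used simultaneously on the source and on the target, so that the map reads $y^{\gamma }=f^{\gamma }(x)=x^{\gamma }$. Consequently its first- and second-order data collapse, namely
\begin{equation*}
\frac{\partial f^{\gamma }}{\partial x^{k}}=\delta _{k}^{\gamma },\qquad \frac{\partial ^{2}f^{\gamma }}{\partial x^{i}\partial x^{j}}=0 .
\end{equation*}
Substituting these into (\ref{3}), the Hessian term vanishes, the domain term contracts to $-g^{ij}\,{}^{g}\Gamma _{ij}^{\gamma }$ and the target term contracts to $g^{ij}\,{}^{d}\Gamma _{ij}^{\gamma }$, whence
\begin{equation*}
\tau (I)=g^{ij}\bigl({}^{d}\Gamma _{ij}^{\gamma }-{}^{g}\Gamma _{ij}^{\gamma }\bigr)\frac{\partial }{\partial y^{\gamma }} .
\end{equation*}

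Therefore $I$ is harmonic precisely when $g^{ij}\bigl({}^{d}\Gamma _{ij}^{\gamma }-{}^{g}\Gamma _{ij}^{\gamma }\bigr)=0$ for every $\gamma $, and this contracted difference of Christoffel symbols is exactly the condition adopted as the definition of $d$ being harmonic with respect to $g$; this yields the assertion. I do not expect any genuine obstacle, since the identity map kills both the first-order data (through the $\delta _{k}^{\gamma }$) and the second-order data (through the vanishing Hessian), so the computation is entirely mechanical. The only point requiring attention is the bookkeeping of which metric supplies the trace $g^{ij}$ and each set of Christoffel symbols, together with the remark that the expression $g^{ij}\bigl({}^{d}\Gamma _{ij}^{\gamma }-{}^{g}\Gamma _{ij}^{\gamma }\bigr)$ is well defined because the difference of the two Levi-Civita connections is a genuine tensor.
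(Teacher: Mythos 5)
Your computation of the tension field is correct: in a common chart the Hessian term in (\ref{3}) vanishes for the identity map, the first derivatives become Kronecker deltas, and $\tau(I)^{\gamma}=g^{ij}\bigl({}^{d}\Gamma_{ij}^{\gamma}-\Gamma_{ij}^{\gamma}\bigr)$, a well-defined expression since the difference of two Levi-Civita connections is a tensor. This is exactly the (implicit) computation by which the paper passes from this theorem and (\ref{3}) to its Lemma containing the trace criterion (\ref{4}).

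The gap is in your last step, where you close the argument by declaring the contracted Christoffel difference to be ``the condition adopted as the definition of $d$ being harmonic with respect to $g$.'' That reading makes the theorem a tautology, which it is not: in \cite{Chen}, from which the statement is imported, harmonicity of $d$ with respect to $g$ is defined intrinsically --- $d$ must be a harmonic symmetric tensor on $(M,g)$, i.e.\ satisfy the divergence identity $g^{ij}\nabla_{i}d_{jk}=\frac{1}{2}\partial_{k}\bigl(g^{ij}d_{ij}\bigr)$, where $\nabla$ is the Levi-Civita connection of $g$ --- and the content of the theorem is that $\tau(I)=0$ forces this identity. The missing step is the tensorial computation linking the two: write ${}^{d}\Gamma_{ij}^{k}-\Gamma_{ij}^{k}=\frac{1}{2}d^{kl}\bigl(\nabla_{i}d_{jl}+\nabla_{j}d_{il}-\nabla_{l}d_{ij}\bigr)$ and contract with $g^{ij}$ to get $\tau(I)^{k}=d^{kl}\bigl(g^{ij}\nabla_{i}d_{jl}-\frac{1}{2}\partial_{l}(g^{ij}d_{ij})\bigr)$; since $(d^{kl})$ is invertible, $\tau(I)=0$ is then equivalent to the harmonic-tensor equation. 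Without this step you have in effect proven the paper's Lemma (the practical criterion (\ref{4})), not the cited theorem. In fairness, the paper itself gives no proof of this statement either --- it cites \cite{Chen} --- so the comparison here is with the argument the theorem actually requires rather than with a proof printed in the paper.
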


From (\ref{3}) we have the following lemma.

\begin{lemma}
A pseudo-Riemannian metric $d$ on $M$ is harmonic with respect to another
pseudo-Riemannian metric $g$ on $M$ if 
\begin{equation}
\text{tr}(g^{-1}(^{d}\Gamma ^{k}-\Gamma ^{k}))=0,  \label{4}
\end{equation}%
where $^{d}\Gamma ^{k}$ and $\Gamma ^{k}$ are Christoffel symbols of $d$ and 
$g$ respectively.
\end{lemma}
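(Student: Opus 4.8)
The plan is to derive the harmonicity condition by specializing the tension field formula (\ref{3}) to the identity map $I:(M,g)\rightarrow(M,d)$ and invoking the preceding theorem of Chen and Nagano. Since the identity map is harmonic precisely when $\tau(I)=0$, and by that theorem this is exactly the condition for $d$ to be harmonic with respect to $g$, it suffices to compute $\tau(I)$ explicitly in local coordinates and show it vanishes iff (\ref{4}) holds.

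First I would set up coordinates so that the source $(M,g)$ and target $(M,d)$ are covered by the same chart $(U,x^i)$, and the identity map is expressed locally by $y^\alpha=f^\alpha(x^1,\dots,x^m)=x^\alpha$. The key simplification is that the component functions are just the coordinate functions, so $\frac{\partial f^\gamma}{\partial x^k}=\delta^\gamma_k$ and $\frac{\partial^2 f^\gamma}{\partial x^i\partial x^j}=0$. Substituting these into (\ref{3}), the first term drops out entirely, and the Kronecker deltas collapse the remaining two sums: the middle term becomes $-^M\!\Gamma_{ij}^\gamma$ and the final term becomes $^N\!\Gamma_{ij}^\gamma$. Here the source Christoffel symbols $^M\!\Gamma$ are those of $g$ (which I write $\Gamma$), while the target symbols $^N\!\Gamma$ are those of $d$ (which I write $^d\Gamma$), because the metric on the target copy of $M$ is $d$.

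With these substitutions (\ref{3}) reduces to $\tau(I)=g^{ij}(^d\Gamma_{ij}^\gamma-\Gamma_{ij}^\gamma)\frac{\partial}{\partial y^\gamma}$. Recognizing that contracting the inverse metric $g^{ij}$ against the lower indices of the difference tensor $^d\Gamma^\gamma-\Gamma^\gamma$ is precisely the trace $\mathrm{tr}(g^{-1}(^d\Gamma^k-\Gamma^k))$ written with the index $k$ in place of $\gamma$, I would conclude that $\tau(I)=0$ is equivalent to (\ref{4}). Combining this with the Chen--Nagano theorem, which guarantees that harmonicity of $I$ implies $d$ is harmonic with respect to $g$, finishes the proof.

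I do not expect any genuine obstacle here, since the argument is a direct specialization of an already-stated formula. The only point requiring a little care is bookkeeping of the index ranges and the identification of the source and target Christoffel symbols: because both manifolds are the same underlying $M$ and the map is the identity, the Greek target indices and the Latin source indices run over the same set, so the seemingly distinct summations in (\ref{3}) genuinely collapse onto one another via the Kronecker deltas. Verifying that this identification is legitimate, and hence that the lone surviving expression is exactly the left-hand side of (\ref{4}), is the one step I would state explicitly rather than treat as routine.
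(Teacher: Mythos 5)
Your proposal is correct and follows exactly the route the paper intends: the paper gives no explicit proof beyond the remark ``From (\ref{3}) we have the following lemma,'' and your argument---specializing the tension field of the identity map $I:(M,g)\rightarrow (M,d)$ so that the second-derivative term vanishes, the Kronecker deltas collapse the sums to $g^{ij}({}^{d}\Gamma _{ij}^{k}-\Gamma _{ij}^{k})$, and then invoking the Chen--Nagano theorem---is precisely that derivation, carried out with the index bookkeeping made explicit.
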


We refer to \cite{Chen2} for further details regarding the material of
harmonic maps.

\subsection{Tangent bundle}

We recall the basic information about tangent bundles from \cite{Yano}. Let $%
(M,g)$ be a $m-$dimensional manifold and denote by $\pi :TM\rightarrow M$
its natural projection from tangent bundle $TM$ to $M$. The tangent bundle $%
TM$ of $M$ is a $2m-$dimensional manifold. If $(U,x^{i}),$ $i=1,\cdot \cdot
\cdot ,m$ is a coordinate neighbourhood of $M,$ then $(\pi ^{-1}(U),x^{i},x^{%
\bar{\imath}}=u^{i})$, $\bar{\imath}=m+1,\cdot \cdot \cdot ,2m,$ where $%
(u^{i})$ are the coordinates in each tangent space $T_{q}M$ at $q\in M$ with
respect to the natural base $\{\frac{\partial }{\partial x^{i}}(q)\},\ q$ is
a point in $U$ whose coordinates are $(x^{i}).$

Let $X=X^{i}\frac{\partial }{\partial x^{i}}$ be a vector field in $U$ $%
\subset $ $M$. The vertical lift $X^{V}\ $and the horizontal lift $X^{H}$ of 
$X$ are given, with respect to the induced coordinates, by 
\begin{equation}
X^{V}=X^{i}\partial _{\bar{\imath}}  \label{5}
\end{equation}%
and%
\begin{equation}
X^{H}=X^{i}\partial _{i}-u^{a}\Gamma _{ak}^{i}X^{k}\partial _{\overline{i}},
\label{6}
\end{equation}%
where $\partial _{i}=\frac{\partial }{\partial x^{i}},\ \partial _{\bar{%
\imath}}=\frac{\partial }{\partial u^{i}}$ and $\Gamma _{ij}^{k}$ are the
coefficients of the Levi-Civita connection $\nabla $ of $g.$

In local charts $U\subset M,$ if we write $X_{(i)}=\frac{\partial }{\partial
x^{i}},$ then from (\ref{5}) and (\ref{6}), we get the following local
expressions, respectively%
\begin{eqnarray}
X_{(i)}^{H} &=&\delta _{i}^{h}\partial _{h}-(u^{a}\Gamma _{ai}^{h})\partial
_{\bar{h}},  \label{7} \\
X_{(i)}^{V} &=&\delta _{i}^{h}\partial _{\bar{h}},  \notag
\end{eqnarray}%
with respect to natural frame $\{\partial _{h},\partial _{\bar{h}}\},$ where 
$\delta _{i}^{h}$ denotes the Kronecker delta. We call the set $\{E_{\alpha
}\}=\{E_{i},E_{\bar{\imath}}\}=\{X_{(i)}^{H},X_{(i)}^{V}\}$ the frame
adapted to the Levi-Civita connection $\nabla $ of $g$ in $\pi
^{-1}(U)\subset TM.$

From the equations (\ref{5}), (\ref{6}) and (\ref{7}), we get%
\begin{equation}
X^{V}=X^{i}E_{\bar{\imath}},\ X^{V}=\left( 
\begin{array}{c}
{{}0} \\ 
{{}X}^{i}%
\end{array}%
\right) ,  \label{8}
\end{equation}%
\begin{equation}
X^{H}=X^{i}E_{i},\ X^{H}=\left( 
\begin{array}{c}
{{}X^{i}} \\ 
{{}0}%
\end{array}%
\right) ,  \label{9}
\end{equation}%
with respect to the adapted frame $\{E_{\alpha }\}.$

Now we recall well-known three metrics, namely the Sasaki lift metric, the
horizontal lift metric and the complete lift metric on the tangent bundle.

The Sasaki lift metric $^{S}g$ of the metric $g$ on $TM$ is defined by 
\begin{eqnarray*}
^{S}g\left( {}X^{H},{}Y^{H}\right) &=&{}g(X,Y), \\
^{S}g\left( {}X^{H},Y^{V}\right) &=&0, \\
^{S}g\left( {}X^{V},{}Y^{V}\right) &=&{}g(X,Y),
\end{eqnarray*}%
for all vector fields $X,Y\in \chi (M).$ The matrix representations of the
Sasaki lift metric $^{S}g$ and its inverse $^{S}g^{-1}$ are given by,
respectively%
\begin{equation}
^{S}g=\left( 
\begin{array}{cc}
g_{ij} & 0 \\ 
0 & g_{ij}%
\end{array}%
\right) ,\ \ ^{S}g^{-1}=\left( 
\begin{array}{cc}
g^{ij} & 0 \\ 
0 & g^{ij}%
\end{array}%
\right) ,  \label{10}
\end{equation}%
with respect to the adapted frame $\{E_{\alpha }\}.$

For the Levi-Civita connection coefficients of the Sasaki lift metric $%
^{S}g,\ $we have%
\begin{eqnarray}
^{S}\Gamma _{ij}^{k}\ &=&\ ^{S}\Gamma _{i\overline{j}}^{\overline{k}}=\Gamma
_{ij}^{k},\quad \ ^{S}\Gamma _{i\overline{j}}^{k}=\frac{1}{2}R_{hji}^{k}u^{h},
\label{11} \\
^{S}\Gamma _{\overline{i}j}^{k} &=&\frac{1}{2}R_{hij}^{k}u^{h},\ \quad ^{S}\Gamma
_{ij}^{\overline{k}}=-\frac{1}{2}R_{ijh}^{k}u^{h},  \notag
\end{eqnarray}%
where $R$ is the curvature tensor of $(M,g).$

The horizontal lift metric $^{H}g$ of the metric $g$ on $TM$ is defined by%
\begin{eqnarray*}
^{H}g\left( {}X^{H},{}Y^{H}\right) &=&0, \\
^{H}g\left( {}X^{H},Y^{V}\right) &=&{}g(X,Y), \\
^{H}g\left( {}X^{V},{}Y^{V}\right) &=&{}0,
\end{eqnarray*}%
for all vector fields $X,Y\in \chi (M).$ The matrix representations of the
horizontal lift metric $^{H}g$ and its inverse $^{H}g^{-1}$ are given by,
respectively%
\begin{equation}
^{H}g=\left( 
\begin{array}{cc}
0 & g_{ij} \\ 
g_{ij} & 0%
\end{array}%
\right) ,\ \ ^{H}g^{-1}=\left( 
\begin{array}{cc}
0 & g^{ij} \\ 
g^{ij} & 0%
\end{array}%
\right) ,  \label{12}
\end{equation}%
with respect to the adapted frame $\{E_{\alpha }\}.$

Non-zero components of the Levi-Civita connection of the horizontal lift
metric $^{H}g\ $are given by 
\begin{equation}
^{H}\Gamma _{ij}^{k}\ =\ ^{H}\Gamma _{i\overline{j}}^{k}=\Gamma _{ij}^{k}.
\label{13}
\end{equation}

Finally, we will introduce the complete lift metric $^{C}g$ of the metric $g$
on $TM.\ $In order to do this, we should remind the complete lift $f^{C}$ of
a function $f$ and complete lift $X^{C}\ $of a vector field $X$ on $TM$,
where $f$ and $X$ are defined on $M.\ $

Let $\omega $ be a 1-form on $M.$ The evaluation map is a function such that 
$\imath \omega :TM\rightarrow 
\mathbb{R}
,$ $\imath \omega (p,u)=\omega _{p}(u)$. According to evaluation map, the
complete lift of a function $f$ is defined by $f^{C}=\imath (df).$ The
complete lifts of vector fields are determined by their actions on these
complete lift functions. That is to say, for vector fields $\widetilde{X}\ $%
and $\widetilde{Y}\ $on $TM,$ $\widetilde{X}(f^{C})=\widetilde{Y}(f^{C})$ if
and only if $\widetilde{X}=\widetilde{Y}$, for all functions $f$. For a
vector field $X$ on $M,$ its complete lift $X^{C\text{ }}$is the vector
field on $TM$ defined by $X^{C}(f^{C})=(Xf)^{C}.\ $For a
pseudo-Riemannian manifold $(M,g)$, the tangent bundle $TM$ can be equipped
with the complete lift metric $^{C}g$ as 
\begin{equation}
^{C}g=(X^{C},Y^{C})=(g(X,Y))^{C},  \label{14}
\end{equation}%
for arbitrary vector fields $X,Y\in \chi (M).$ The complete lift metric $%
^{C}g$ is given in the matrix form%
\begin{equation}
^{C}g=\left( 
\begin{array}{cc}
u^{k}\frac{\partial g_{ij}}{\partial x^{k}} & g_{ij} \\ 
g_{ij} & 0%
\end{array}%
\right) ,  \label{15}
\end{equation}%
with respect to the local coordinates $(x^{i},u^{i})$ on $TM.\ $Non-zero
components of the Levi-Civita connection of the complete lift metric $^{C}g\ 
$are given by%
\begin{eqnarray*}
^{C}\Gamma _{ij}^{k}\  &=&\ ^{C}\Gamma _{i\overline{j}}^{\bar{k}}=\
^{C}\Gamma _{\overline{i}j}^{\bar{k}}=\Gamma _{ij}^{k}, \\
\ ^{C}\Gamma _{ij}^{\overline{k}} &=&u^{l}\frac{\partial }{\partial x^{l}}%
\Gamma _{ij}^{k}.
\end{eqnarray*}

\section{Harmonic GK-S type metrics}

To obtain the harmonicity condition of the metrics defined in (\ref{1}), we
express the following theorem.

\begin{theorem}
Let $(M,g)$ be a GK-S spacetime with the metric (\ref{1}) with respect to
the local coordinates $(t,r,\theta ,\phi ).\ $Then, the GK-S type metric 
\begin{equation}
\widehat{g}=dt^{2}-\widehat{X}^{2}(t)dr^{2}-\widehat{Y}^{2}(t)[d\theta ^{2}+%
\widehat{f}^{2}(\theta )d\phi ^{2}]  \label{17}
\end{equation}%
is harmonic with respect to $g$ if and only if%
\begin{equation}
\begin{array}{c}
\frac{1}{X^{2}}(\widehat{X}^{\prime }\widehat{X}-X^{\prime }X)+\frac{1}{Y^{2}%
}(\widehat{Y}^{\prime }\widehat{Y}-Y^{\prime }Y)+\frac{1}{Y^{2}f^{2}}(%
\widehat{Y}^{\prime }\widehat{Y}\widehat{f}^{2}-Y^{\prime }Yf^{2})=0\text{
and} \\ 
-\widehat{f}\widehat{f}^{\prime }+ff^{\prime }=0.%
\end{array}
\label{18}
\end{equation}
\end{theorem}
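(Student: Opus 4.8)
The plan is to use the harmonicity criterion (\ref{4}) of the Lemma, read componentwise. For the identity map $I:(M,g)\to(M,\widehat g)$ one has $f^{\gamma}=x^{\gamma}$, so $\partial f^{\gamma}/\partial x^{k}=\delta^{\gamma}_{k}$ and $\partial^{2}f^{\gamma}/\partial x^{i}\partial x^{j}=0$; substituting into the tension field (\ref{3}) gives $\tau(I)^{k}=g^{ij}\big({}^{\widehat g}\Gamma_{ij}^{k}-\Gamma_{ij}^{k}\big)=\mathrm{tr}\big(g^{-1}({}^{\widehat g}\Gamma^{k}-\Gamma^{k})\big)$. Hence $\widehat g$ is harmonic with respect to $g$ if and only if this scalar vanishes for every fixed $k$, and the entire problem reduces to computing the Levi-Civita symbols of $g$ and $\widehat g$ and contracting the difference with $g^{-1}$.

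First I would order the coordinates as $(x^{1},x^{2},x^{3},x^{4})=(t,r,\theta,\phi)$, so that from (\ref{1}) the metric is diagonal, $g_{ij}=\mathrm{diag}(1,-X^{2},-Y^{2},-Y^{2}f^{2})$, with inverse $g^{ij}=\mathrm{diag}(1,-X^{-2},-Y^{-2},-Y^{-2}f^{-2})$, while $\widehat g$ in (\ref{17}) is obtained by the replacement $X\mapsto\widehat X$, $Y\mapsto\widehat Y$, $f\mapsto\widehat f$. Because $g$ is diagonal and its nonconstant entries depend only on $t$ and $\theta$, a direct evaluation of $\Gamma_{ij}^{k}=\tfrac12 g^{kl}(\partial_{i}g_{jl}+\partial_{j}g_{il}-\partial_{l}g_{ij})$ leaves only the symbols $\Gamma_{22}^{1}=XX'$, $\Gamma_{33}^{1}=YY'$, $\Gamma_{44}^{1}=YY'f^{2}$, $\Gamma_{12}^{2}=X'/X$, $\Gamma_{13}^{3}=Y'/Y$, $\Gamma_{44}^{3}=-ff'$, $\Gamma_{14}^{4}=Y'/Y$ and $\Gamma_{34}^{4}=f'/f$ (with their symmetric partners), together with the analogous hatted symbols of $\widehat g$.

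Next I would contract. As $g^{-1}$ is diagonal, $\mathrm{tr}\big(g^{-1}({}^{\widehat g}\Gamma^{k}-\Gamma^{k})\big)=\sum_{i}g^{ii}\big({}^{\widehat g}\Gamma_{ii}^{k}-\Gamma_{ii}^{k}\big)$, so only the \emph{diagonal} entries of each matrix ${}^{\widehat g}\Gamma^{k}-\Gamma^{k}$ matter. The decisive observation is that for $k=2$ and $k=4$ every nonzero Christoffel symbol ($\Gamma_{12}^{2}$, $\Gamma_{14}^{4}$, $\Gamma_{34}^{4}$ and their hatted and symmetric versions) is strictly off-diagonal; hence those two traces vanish identically and yield no constraint. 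For $k=1$ the surviving indices are $(2,2),(3,3),(4,4)$, giving $-X^{-2}(\widehat X'\widehat X-X'X)-Y^{-2}(\widehat Y'\widehat Y-Y'Y)-Y^{-2}f^{-2}(\widehat Y'\widehat Y\widehat f^{2}-Y'Yf^{2})$, and for $k=3$ only $(4,4)$ survives, giving $-Y^{-2}f^{-2}(-\widehat f\widehat f'+ff')$.

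Setting both of these scalars to zero reproduces precisely the two equations of (\ref{18}), and since the tension field vanishes if and only if each of its four components does, this proves the stated equivalence. I expect no genuine conceptual difficulty: once the diagonal structure is exploited the computation is elementary. The only point demanding care is the bookkeeping of the Christoffel symbols and the recognition that the purely off-diagonal symbols arising for $k=2$ and $k=4$ are annihilated by the diagonal inverse $g^{-1}$, which is exactly why the four a priori conditions collapse to the two displayed in (\ref{18}).
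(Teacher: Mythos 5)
Your proposal is correct and follows essentially the same route as the paper: it invokes the trace criterion $\mathrm{tr}\big(g^{-1}({}^{\widehat g}\Gamma^{k}-\Gamma^{k})\big)=0$, computes the same nonzero Christoffel symbols of $g$ (and their hatted counterparts), and observes that only $k=1$ and $k=3$ produce nontrivial conditions because the symbols for $k=2,4$ are purely off-diagonal while $g^{-1}$ is diagonal, exactly as in the paper's proof. The only cosmetic difference is that you re-derive the criterion from the tension field of the identity map, which the paper packages as its Lemma.
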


\begin{proof}
Let $(M,g)$ be a GK-S spacetime with the metric%
\begin{equation*}
g=dt^{2}-X^{2}(t)dr^{2}-Y^{2}(t)[d\theta ^{2}+f^{2}(\theta )d\phi ^{2}]
\end{equation*}%
and $\widehat{g}$ be an arbitrary GK-S type metric which is harmonic with
respect to the metric $g.$ Put $X(t)=X,\ Y(t)=Y\ $and $f(\theta )=f.\ $The
matrix representation of the metric $g$ is 
\begin{equation}
g=\left( 
\begin{array}{cccc}
1 & 0 & 0 & 0 \\ 
0 & -X^{2} & 0 & 0 \\ 
0 & 0 & -Y^{2} & 0 \\ 
0 & 0 & 0 & -Y^{2}f^{2}%
\end{array}%
\right) .  \label{19}
\end{equation}%
Obviously, 
\begin{equation}
g^{-1}=\left( 
\begin{array}{cccc}
1 & 0 & 0 & 0 \\ 
0 & \frac{1}{-X^{2}} & 0 & 0 \\ 
0 & 0 & \frac{1}{-Y^{2}} & 0 \\ 
0 & 0 & 0 & \frac{1}{-Y^{2}f^{2}}%
\end{array}%
\right) .  \label{20}
\end{equation}%
Put $\Gamma ^{k}=(\Gamma _{ij}^{k}),i,j=1,\cdot \cdot \cdot ,4$ and $%
k=1,\cdot \cdot \cdot ,4,$ then%
\begin{equation*}
\Gamma ^{1}=\left( 
\begin{array}{cccc}
0 & 0 & 0 & 0 \\ 
0 & X^{\prime }X & 0 & 0 \\ 
0 & 0 & Y^{\prime }Y & 0 \\ 
0 & 0 & 0 & f^{2}Y^{\prime }Y%
\end{array}%
\right) ,
\end{equation*}%
\begin{equation*}
\Gamma ^{2}=\left( 
\begin{array}{cccc}
0 & \frac{X^{\prime }}{X} & 0 & 0 \\ 
\frac{X^{\prime }}{X} & 0 & 0 & 0 \\ 
0 & 0 & 0 & 0 \\ 
0 & 0 & 0 & 0%
\end{array}%
\right) ,
\end{equation*}%
\begin{equation*}
\Gamma ^{3}=\left( 
\begin{array}{cccc}
0 & 0 & \frac{Y^{\prime }}{Y} & 0 \\ 
0 & 0 & 0 & 0 \\ 
\frac{Y^{\prime }}{Y} & 0 & 0 & 0 \\ 
0 & 0 & 0 & -f^{\prime }f%
\end{array}%
\right) ,
\end{equation*}%
\begin{equation*}
\Gamma ^{4}=\left( 
\begin{array}{cccc}
0 & 0 & 0 & \frac{Y^{\prime }}{Y} \\ 
0 & 0 & 0 & 0 \\ 
0 & 0 & 0 & \frac{f^{\prime }}{f} \\ 
\frac{Y^{\prime }}{Y} & 0 & \frac{f^{\prime }}{f} & 0%
\end{array}%
\right) .
\end{equation*}%
The Levi-Civita components $\widehat{\Gamma }^{1},\cdot \cdot \cdot ,%
\widehat{\Gamma }^{4}$ of the metric $\widehat{g}$ are obtained by replacing 
$\widehat{X},\ \widehat{Y}$ and $\widehat{f}$ instead of $X,Y$ and $f$ in $%
\Gamma ^{1},\cdot \cdot \cdot ,\Gamma ^{4}.$ By direct computations, we get
the only non-zero traces as follows:%
\begin{eqnarray*}
\text{{}tr}(g^{-1}(\widehat{\Gamma }^{1}-\Gamma ^{1})) &=&-(\frac{1}{X^{2}}(%
\widehat{X}^{\prime }\widehat{X}-X^{\prime }X)+\frac{1}{Y^{2}}(\widehat{Y}%
^{\prime }\widehat{Y}-Y^{\prime }Y) \\
&&+\frac{1}{Y^{2}f^{2}}(\widehat{Y}^{\prime }\widehat{Y}\widehat{f}%
^{2}-Y^{\prime }Yf^{2})), \\
\text{tr}((g^{-1}(\widehat{\Gamma }^{3}-\Gamma ^{3})) &=&-\frac{1}{Y^{2}f^{2}%
}(-\widehat{f}\widehat{f}^{\prime }+ff^{\prime }).
\end{eqnarray*}%
So, due to (\ref{4}), the identity map $I:(M,g)\rightarrow (M,\widehat{g})$ is
harmonic if and only if 
\begin{eqnarray*}
\frac{1}{X^{2}}(\widehat{X}^{\prime }\widehat{X}-X^{\prime }X)+\frac{1}{Y^{2}%
}(\widehat{Y}^{\prime }\widehat{Y}-Y^{\prime }Y)+\frac{1}{Y^{2}f^{2}}(%
\widehat{Y}^{\prime }\widehat{Y}\widehat{f}^{2}-Y^{\prime }Yf^{2}) &=&0\ 
\text{and} \\
-\frac{1}{Y^{2}f^{2}}(-\widehat{f}\widehat{f}^{\prime }+ff^{\prime }) &=&0.
\end{eqnarray*}%
Thus the proof is complete.
\end{proof}

\begin{example}
\label{exp1}Let us define two GK-S type spacetime metrics on $M$ as follows:%
\begin{eqnarray*}
\widehat{g}_{1} &=&dt^{2}-c_{1}{}^{2}-c_{2}{}^{2}[d\theta ^{2}+\sinh
^{2}\theta d\phi ^{2}], \\
g_{1} &=&dt^{2}-e_{1}{}^{2}-e_{2}{}^{2}[d\theta ^{2}+\theta ^{2}d\phi ^{2}],
\end{eqnarray*}%
where $c_{1},c_{2},e_{1},e_{2}$ are arbitrary constants with respect to the
local coordinates $(t,r,\theta ,\phi )$. These metrics are special cases of
the Bianchi type-III metric and the Bianchi type-I metric, respectively.
Obviously, the metrics $\widehat{g}_{1}$ and $g_{1}$ satisfy the equation (%
\ref{18})$_{1}$, but (\ref{18})$_{2}\ $is not valid for these metrics. So,
the metric $\widehat{g}_{1}$ is not harmonic with respect to the metric $%
g_{1}.$
\end{example}

\section{Harmonic lifted GK-S type metrics}

This section is devoted to study harmonicity of the Sasaki lift, horizontal
lift and complete lift of the GK-S type metrics to the tangent bundle.

\begin{theorem}
Let $(M,g)$ be a GK-S spacetime with the metric defined in (\ref{1}).$\ $%
Then, the Sasaki lift metric $^{S}\widehat{g}$ is harmonic with respect to
the Sasaki lift metric $^{S}g$ if and only if 
\begin{eqnarray*}
\frac{1}{X^{2}}(\widehat{X}^{\prime }\widehat{X}-X^{\prime }X)+\frac{1}{Y^{2}%
}(\widehat{Y}^{\prime }\widehat{Y}-Y^{\prime }Y)+\frac{1}{Y^{2}f^{2}}(%
\widehat{Y}^{\prime }\widehat{Y}\widehat{f}^{2}-Y^{\prime }Yf^{2}) &=&0\ 
\text{and} \\
-\widehat{f}\widehat{f}^{\prime }+ff^{\prime } &=&0,\ 
\end{eqnarray*}%
where $\widehat{g}$ is given in (\ref{17}).
\end{theorem}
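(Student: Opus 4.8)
The plan is to reduce the harmonicity condition for the Sasaki lifts on $TM$ to the harmonicity condition for the base metrics on $M$, which was already computed in Theorem 2. The key tool is again the Lemma: $^{S}\widehat{g}$ is harmonic with respect to $^{S}g$ if and only if $\mathrm{tr}(\,^{S}g^{-1}(^{S}\widehat{\Gamma}^{\alpha}-{}^{S}\Gamma^{\alpha}))=0$ for every upper index $\alpha$, where now $\alpha$ ranges over both the horizontal indices $k=1,\dots,4$ and the vertical indices $\bar{k}=\bar{1},\dots,\bar{4}$.

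First I would record the inputs. The inverse Sasaki metric is block-diagonal, $^{S}g^{-1}=\mathrm{diag}(g^{ij},g^{ij})$ by (\ref{10}), and the Christoffel symbols of $^{S}g$ are given by (\ref{11}) in terms of the base symbols $\Gamma^{k}_{ij}$ and the base curvature tensor $R^{k}_{hij}$. The crucial observation is that the differences $^{S}\widehat{\Gamma}^{\alpha}-{}^{S}\Gamma^{\alpha}$ split, by (\ref{11}), into a piece built from $\widehat{\Gamma}^{k}_{ij}-\Gamma^{k}_{ij}$ and a piece built from the curvature difference $\widehat{R}^{k}_{hij}-R^{k}_{hij}$ (contracted with $u^{h}$). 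So the plan is to organize the $2m$-dimensional trace as a sum of traces that either reproduce the base-manifold traces from Theorem 2 or vanish identically.

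Next I would carry out the trace index by index over the adapted frame. Because $^{S}g^{-1}$ is block-diagonal and the cross-block connection coefficients in (\ref{11}) (the terms $\frac12 R^{k}_{hji}u^{h}$, $\frac12 R^{k}_{hij}u^{h}$, $-\frac12 R^{k}_{ijh}u^{h}$) carry mixed barred/unbarred index types, I expect each such curvature term to be traced against an off-diagonal zero block of $^{S}g^{-1}$, or to contract into a trace over an antisymmetric curvature slot and hence vanish. The surviving contributions should be exactly $\mathrm{tr}(g^{ij}(\widehat{\Gamma}^{k}_{ij}-\Gamma^{k}_{ij}))$ coming from the diagonal blocks, appearing once from the horizontal sector and once (via $^{S}\Gamma^{\bar k}_{i\bar j}=\Gamma^{k}_{ij}$) from the vertical sector. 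These are precisely the base-manifold quantities evaluated in the proof of Theorem 2, namely the expressions whose vanishing gives (\ref{18}). I would then invoke Theorem 2 to conclude that the two displayed equations are equivalent to the harmonicity of $^{S}\widehat{g}$ with respect to $^{S}g$.

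The main obstacle will be bookkeeping rather than conceptual difficulty: I must verify carefully that every curvature-carrying term in (\ref{11}) drops out of the full trace, using both the block structure of $^{S}g^{-1}$ and the symmetry/antisymmetry properties of the base curvature tensor $R^{k}_{hij}$ in its lower indices. In particular I would check that contractions such as $g^{ij}R^{k}_{hij}u^{h}$ vanish by antisymmetry of $R$ in $(i,j)$, and that the mixed-index curvature terms are killed by the zero off-diagonal blocks of $^{S}g^{-1}$. Once these vanishings are confirmed, no factor of $u^{h}$ survives, the harmonicity condition becomes independent of the fiber coordinate, and it collapses to exactly the two equations of (\ref{18}), establishing that harmonicity lifts without change.
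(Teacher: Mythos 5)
Your proposal is correct and follows the same skeleton as the paper's proof: both apply the trace criterion (\ref{4}) in the adapted frame, split the harmonicity condition into the system (\ref{21}) over unbarred and barred upper indices, use the block-diagonal form (\ref{10}) of $^{S}g^{-1}$ together with (\ref{11}) to reduce the unbarred equations to $\mathrm{tr}(g^{ij}(\widehat{\Gamma}_{ij}^{k}-\Gamma_{ij}^{k}))=0$, i.e.\ to condition (\ref{18}) of the base-manifold theorem, and then show that the barred equations impose nothing new. Where you genuinely differ is in the barred sector: the paper computes all non-vanishing components $R_{ij0}^{k}$ of the GK-S curvature explicitly and checks by direct computation that the relevant trace vanishes, whereas you argue structurally that the only curvature term surviving the zero off-diagonal blocks is $^{S}\Gamma_{ij}^{\bar{k}}=-\frac{1}{2}R_{ijh}^{k}u^{h}$, whose trace $g^{ij}R_{ijh}^{k}u^{h}$ vanishes because $R_{ijh}^{k}$ is antisymmetric in $(i,j)$ --- immediate from the paper's definition of the curvature tensor. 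Your route is cleaner and strictly more general: it shows the barred equations hold identically for \emph{any} base metric, not just GK-S ones, and it explains a priori why no fiber coordinate $u^{h}$ can survive, which the paper only observes a posteriori.

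Two slips in your write-up are worth correcting, though neither is fatal. First, your sample contraction ``$g^{ij}R_{hij}^{k}u^{h}$ vanishes by antisymmetry in $(i,j)$'' has the indices misplaced: with the paper's convention, $R_{hij}^{k}$ is antisymmetric in $(h,i)$, and that particular contraction is a Ricci-type quantity, generically nonzero; the term that actually needs the antisymmetry argument is $g^{ij}R_{ijh}^{k}u^{h}$, coming from $^{S}\Gamma_{ij}^{\bar{k}}$ (the mixed-index terms $\frac{1}{2}R_{hji}^{k}u^{h}$ and $\frac{1}{2}R_{hij}^{k}u^{h}$ are killed by the zero blocks, as you say elsewhere). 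Second, the base condition does not appear ``once from each sector'': the coefficient $^{S}\Gamma_{i\bar{j}}^{\bar{k}}=\Gamma_{ij}^{k}$ carries mixed lower indices and is therefore annihilated by the zero block $^{S}g^{i\bar{j}}=0$, so the barred-sector trace is identically zero and (\ref{18}) arises exactly once, from the horizontal sector. Since a duplicated condition would in any case be equivalent to a single copy, your final conclusion is unaffected.
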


\begin{proof}
Let $(M,g)$ be a GK-S spacetime with the metric given in (\ref{1}) and $%
\widehat{g}$ be an arbitrary metric on $M$ given by (\ref{17}). Set $%
(t,r,\theta ,\phi )=(x^{1},x^{2},x^{3},x^{4})$. From (\ref{4}), the metric $%
^{S}\widehat{g}$ is harmonic with respect to $^{S}g$ if and only if the
following relations hold:%
\begin{equation}
\left\{ 
\begin{array}{l}
\text{tr}(^{S}g^{-1}(^{S}\widehat{\Gamma }^{k}-^{S}\!\Gamma ^{k}))=0, \\ 
\text{tr}(^{S}g^{-1}(^{S}\widehat{\Gamma }^{k}-^{S}\!\Gamma ^{\bar{k}}))=0,%
\end{array}%
\right.   \label{21}
\end{equation}%
for indices $k=1,\cdot \cdot \cdot ,4,$ where $^{S}\widehat{\Gamma }^{k}$
and $^{S}\Gamma ^{k}$ are the Christoffel symbols of the metrics $^{S}%
\widehat{g}$ and $^{S}g,$ respectively. Due to (\ref{10}) and (\ref{11}),
the first equation of (\ref{21}) reduces to 
\begin{equation*}
\text{tr}(g^{ij}(\widehat{\Gamma }_{ij}^{k}-\Gamma _{ij}^{k}))=0,\ \forall
i,j,k=1,\cdot \cdot \cdot ,4.\ 
\end{equation*}%
It is obvious that this equation leads same results (\ref{18}).

Having in mind (\ref{10}) and (\ref{11}), the second equation of (\ref{21})
becomes%
\begin{equation*}
\text{tr}(g^{ij}(\widehat{R}_{ij0}^{k}-R_{ij0}^{k}))=0,\ \forall
i,j,k=1,\cdot \cdot \cdot ,4,
\end{equation*}%
where $R_{ij0}^{k}=R_{ijh}^{k}u^{h}$ and $\widehat{R}_{ij0}^{k}=\widehat{R}%
_{ijh}^{k}u^{h}.$ The non-zero components of the curvature tensor $R$ are
obtained as follows by direct calculations:%
\begin{eqnarray*}
{}R_{120}^{1} &=&u^{2}XX^{^{\prime \prime }},\ R_{120}^{2}=u^{1}\frac{%
X^{^{\prime \prime }}}{X}, \\
R_{130}^{1} &=&u^{3}YY^{^{\prime \prime }},\ R_{130}^{3}=u^{1}\frac{%
Y^{^{\prime \prime }}}{Y}, \\
R_{140}^{1} &=&u^{4}f^{2}YY^{^{\prime \prime }},\ R_{140}^{4}=u^{1}\frac{%
Y^{^{\prime \prime }}}{Y}, \\
R_{230}^{2} &=&-u^{3}\frac{X^{\prime }YY^{\prime }}{X},\ R_{230}^{3}=-u^{2}%
\frac{XX^{\prime }Y^{\prime }}{Y}, \\
R_{240}^{2} &=&u^{4}\frac{f^{2}X^{\prime }YY^{\prime }}{X},\
R_{240}^{4}=-u^{2}\frac{XX^{\prime }Y^{\prime }}{Y}, \\
R_{340}^{3} &=&u^{4}f(f(Y^{\prime })^{2}-f^{\prime \prime }),\
R_{340}^{4}=-u^{3}\frac{f(Y^{\prime })^{2}-f^{\prime \prime }}{f}.
\end{eqnarray*}%
The non-zero components of the curvature tensor $\widehat{R}$ are determined
by replacing $\hat{X},\ \widehat{Y}$ and $\widehat{f}$ by $X,Y$ and $f$ in
above relations. Direct computations show that the left-hand side of the
second equation of (\ref{21}) vanishes identically. So, the metric $^{S}%
\widehat{g}$ is harmonic with respect to $^{S}g$ if and only if%
\begin{eqnarray*}
\frac{1}{X^{2}}(\widehat{X}^{\prime }\widehat{X}-X^{\prime }X)+\frac{1}{Y^{2}%
}(\widehat{Y}^{\prime }\widehat{Y}-Y^{\prime }Y)+\frac{1}{Y^{2}f^{2}}(%
\widehat{Y}^{\prime }\widehat{Y}\widehat{f}^{2}-Y^{\prime }Yf^{2}) &=&0\ 
\text{and} \\
-\widehat{f}\widehat{f}^{\prime }+ff^{\prime } &=&0.
\end{eqnarray*}%
Thus the proof is complete.
\end{proof}

\begin{theorem}
Let $(M,g)$ be a GK-S spacetime with the metric defined in (\ref{1}).$\ $%
Then, the horizontal lift metric $^{H}\widehat{g}$ is harmonic with respect
to the horizontal lift metric $^{H}g$ if and only if 
\begin{eqnarray*}
\frac{1}{X^{2}}(\widehat{X}^{\prime }\widehat{X}-X^{\prime }X)+\frac{1}{Y^{2}%
}(\widehat{Y}^{\prime }\widehat{Y}-Y^{\prime }Y)+\frac{1}{Y^{2}f^{2}}(%
\widehat{Y}^{\prime }\widehat{Y}\widehat{f}^{2}-Y^{\prime }Yf^{2}) &=&0\text{
and} \\
-\widehat{f}\widehat{f}^{\prime }+ff^{\prime } &=&0,
\end{eqnarray*}%
where $\widehat{g}$ is given in (\ref{17}).
\end{theorem}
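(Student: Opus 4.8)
The plan is to run the same scheme as in the preceding theorem for the Sasaki lift, now using the considerably simpler connection (\ref{13}) of the horizontal lift. Set $(t,r,\theta ,\phi )=(x^{1},x^{2},x^{3},x^{4})$ and let the indices $\alpha ,\beta ,\gamma $ range over $\{1,\dots ,4,\bar{1},\dots ,\bar{4}\}$. Applying the criterion (\ref{4}) to the pair $({}^{H}g,{}^{H}\widehat{g})$, the metric $^{H}\widehat{g}$ is harmonic with respect to $^{H}g$ if and only if
\[
\text{tr}\big({}^{H}g^{-1}({}^{H}\widehat{\Gamma}^{\alpha }-{}^{H}\Gamma^{\alpha })\big)=0
\]
for every upper index $\alpha \in \{1,\dots ,4,\bar{1},\dots ,\bar{4}\}$, where $^{H}\Gamma ^{\alpha }=({}^{H}\Gamma ^{\alpha }_{\beta \gamma })$. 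Abbreviate $\Delta ^{k}_{ij}=\widehat{\Gamma }^{k}_{ij}-\Gamma ^{k}_{ij}$.

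First I would dispose of the barred upper indices. By (\ref{13}) none of the nonzero components of $^{H}\Gamma $ (nor of $^{H}\widehat{\Gamma }$) carries a barred upper index, so $^{H}\widehat{\Gamma }^{\bar{k}}-{}^{H}\Gamma ^{\bar{k}}=0$ and the corresponding traces vanish identically. This is exactly where the horizontal lift is easier than the Sasaki lift: there is no barred-upper-index (curvature) block to evaluate and cancel, so the second family of equations appearing in the previous proof simply does not arise.

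Next I would treat an unbarred upper index $\alpha =k$. By (\ref{13}) the only nonzero entries of $D^{k}:={}^{H}\widehat{\Gamma }^{k}-{}^{H}\Gamma ^{k}$ are $D^{k}_{ij}=\Delta ^{k}_{ij}$ and $D^{k}_{i\bar{j}}=\Delta ^{k}_{ij}$, so in $(\text{unbarred},\text{barred})$ block form $D^{k}=\left( \begin{smallmatrix} \Delta ^{k} & \Delta ^{k}\\ 0 & 0 \end{smallmatrix}\right)$. Multiplying on the left by the anti-diagonal inverse $^{H}g^{-1}=\left( \begin{smallmatrix} 0 & g^{-1}\\ g^{-1} & 0 \end{smallmatrix}\right)$ from (\ref{12}) and taking the trace, only the lower-right block $g^{-1}\Delta ^{k}$ lands on the diagonal, whence
\[
\text{tr}\big({}^{H}g^{-1}D^{k}\big)=\text{tr}\big(g^{ij}(\widehat{\Gamma }^{k}_{ij}-\Gamma ^{k}_{ij})\big).
\]
This is precisely the base-manifold trace governing (\ref{4}) on $(M,g)$, so the whole system collapses to the harmonicity condition of the first theorem of Section 3.

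Finally, reading off the two non-vanishing base traces computed there, namely $\text{tr}(g^{-1}(\widehat{\Gamma }^{1}-\Gamma ^{1}))$ and $\text{tr}(g^{-1}(\widehat{\Gamma }^{3}-\Gamma ^{3}))$, gives exactly the two displayed equations and completes the equivalence; this also confirms the paper's guiding observation that the lifted harmonicity condition coincides with that of the base. I expect no real obstacle here: the only delicate point is the block bookkeeping ensuring that the horizontal-vertical pairing in $^{H}g^{-1}$ selects the base trace and contributes nothing extra.
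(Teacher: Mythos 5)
Your proposal is correct and follows essentially the same route as the paper: apply criterion (\ref{4}) to the pair $({}^{H}g,{}^{H}\widehat{g})$, use (\ref{12}) and (\ref{13}) to collapse each lifted trace to the base trace $\mathrm{tr}(g^{-1}(\widehat{\Gamma}^{k}-\Gamma^{k}))$, and invoke the Section 3 theorem to obtain the two displayed equations. The only difference is that you spell out the block bookkeeping and the vanishing of the barred-upper-index equations explicitly, which the paper leaves implicit.
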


\begin{proof}
According to (\ref{4}), to find the harmonicity condition of $^{H}\widehat{g}
$ with respect to $^{H}g$, we deal with the following relations 
\begin{equation}
\left\{ 
\begin{array}{l}
\text{tr}(^{H}g^{-1}(^{H}\widehat{\Gamma }^{k}-^{H}\!\Gamma ^{k}))=0, \\ 
\text{tr}(^{H}g^{-1}(^{H}\widehat{\Gamma }^{k}-^{H}\!\Gamma ^{\bar{k}}))=0,%
\end{array}%
\right.   \label{22}
\end{equation}%
for all indices $k=1,\cdot \cdot \cdot ,4,$ where $^{H}\widehat{\Gamma }^{k}$
and $^{H}\Gamma ^{k}$ are the components of the Levi-Civita connection of
the metrics $^{H}\widehat{g}$ and $^{H}g,$ respectively. Using the equations (%
\ref{12}) and (\ref{13}), we have 
\begin{equation*}
\text{tr}(g^{-1}(\widehat{\Gamma}^{k}-\Gamma ^{k}))=0,\ k=1,\cdot \cdot \cdot ,4.
\end{equation*}%
This equation demonstrates that $^{H}\widehat{g}$ is harmonic with respect
to $^{H}g$ if and only if the equation (\ref{18}) holds. Thus the proof of
this theorem is complete.
\end{proof}

\begin{theorem}
Let $(M,g)$ be a GK-S spacetime with the metric defined in (\ref{1}).$\ $In
this case, the complete lift metric $^{C}\widehat{g}$ is harmonic with
respect to the complete lift metric $^{C}g$ if and only if the following
equations are satisfied:%
\begin{eqnarray}
\frac{1}{X^{2}}(\widehat{X}^{\prime }\widehat{X}-X^{\prime }X)+\frac{1}{Y^{2}%
}(\widehat{Y}^{\prime }\widehat{Y}-Y^{\prime }Y)+\frac{1}{Y^{2}f^{2}}(%
\widehat{Y}^{\prime }\widehat{Y}\widehat{f}^{2}-Y^{\prime }Yf^{2}) &=&0\text{
and}  \notag \\
-\widehat{f}\widehat{f}^{\prime }+ff^{\prime } &=&0,  \label{222}
\end{eqnarray}%
where $\widehat{g}$ is given in (\ref{17}).
\end{theorem}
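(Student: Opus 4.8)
The plan is to follow exactly the pattern established for the Sasaki and horizontal lifts, applying the criterion (\ref{4}) to the pair $({}^{C}g,{}^{C}\widehat{g})$ on $TM$, with the trace now running over all adapted indices $\alpha ,\beta \in \{1,\dots ,4,\bar{1},\dots ,\bar{4}\}$ and the free upper index $\gamma$ ranging over both the unbarred values $k$ and the barred values $\bar{k}$. Thus the harmonicity of ${}^{C}\widehat{g}$ with respect to ${}^{C}g$ is equivalent to the two families
\begin{equation*}
\text{tr}({}^{C}g^{-1}({}^{C}\widehat{\Gamma }^{k}-{}^{C}\Gamma ^{k}))=0,\qquad \text{tr}({}^{C}g^{-1}({}^{C}\widehat{\Gamma }^{\bar{k}}-{}^{C}\Gamma ^{\bar{k}}))=0,
\end{equation*}
for $k=1,\dots ,4$. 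The first step is to invert the complete lift metric (\ref{15}). Reading it as a block matrix with blocks $u^{l}\partial _{l}g_{ij}$, $g_{ij}$, $g_{ij}$, $0$, a direct block inversion yields
\begin{equation*}
{}^{C}g^{-1}=\left(
\begin{array}{cc}
0 & g^{ij} \\
g^{ij} & -g^{ia}(u^{l}\partial _{l}g_{ab})g^{bj}
\end{array}
\right) ,
\end{equation*}
the crucial feature being that the unbarred--unbarred (upper-left) block vanishes, just as for the horizontal lift, while a genuinely new lower-right block appears.

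Next I would evaluate the two families using the connection coefficients listed immediately before the statement. For the unbarred upper index $\gamma =k$ the only nonzero component is ${}^{C}\Gamma _{ij}^{k}=\Gamma _{ij}^{k}$, and it is contracted solely against the unbarred--unbarred block of ${}^{C}g^{-1}$; since that block is zero, the first family collapses to $0=0$ and imposes no restriction. For the barred upper index $\gamma =\bar{k}$ the relevant components are ${}^{C}\Gamma _{ij}^{\bar{k}}=u^{l}\partial _{l}\Gamma _{ij}^{k}$ together with ${}^{C}\Gamma _{i\bar{j}}^{\bar{k}}={}^{C}\Gamma _{\bar{i}j}^{\bar{k}}=\Gamma _{ij}^{k}$. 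On contracting with ${}^{C}g^{-1}$, the derivative term $u^{l}\partial _{l}\Gamma _{ij}^{k}$ is annihilated by the vanishing upper-left block, the vanishing barred--barred block of ${}^{C}\Gamma ^{\bar{k}}$ never meets the nontrivial lower-right block of the inverse, and the two off-diagonal blocks $g^{ij}$ each contribute one copy of $\widehat{\Gamma }_{ij}^{k}-\Gamma _{ij}^{k}$. Hence the second family reduces to
\begin{equation*}
2\,g^{ij}(\widehat{\Gamma }_{ij}^{k}-\Gamma _{ij}^{k})=0,\qquad k=1,\dots ,4,
\end{equation*}
which is exactly (twice) the base-manifold condition $\text{tr}(g^{-1}(\widehat{\Gamma }^{k}-\Gamma ^{k}))=0$ already reduced to the system (\ref{18}) in the proof of the GK-S harmonicity theorem. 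Invoking that computation identifies the surviving equations with (\ref{18}), i.e. (\ref{222}).

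I expect the main obstacle to be twofold: correctly inverting (\ref{15}), whose nonzero upper-left block $u^{k}\partial _{k}g_{ij}$ distinguishes it from the Sasaki and horizontal cases, and then verifying that the two genuinely new ingredients of the complete lift — the derivative term $u^{l}\partial _{l}\Gamma _{ij}^{k}$ in ${}^{C}\Gamma _{ij}^{\bar{k}}$ and the lower-right block $-g^{ia}(u^{l}\partial _{l}g_{ab})g^{bj}$ of ${}^{C}g^{-1}$ — contribute nothing to either trace. Both drop out for purely structural reasons (each is paired against a vanishing block), so the harmonicity condition degenerates to the same pair of equations (\ref{18}) as on the base manifold, which is precisely the claim that the lift leaves the harmonicity condition unchanged.
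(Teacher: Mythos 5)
Your proposal is correct, and it reaches (\ref{222}) by a genuinely different route than the paper. The paper's proof is a brute-force computation in the GK-S coordinates: it writes out the full $8\times 8$ matrices (\ref{23}) and (\ref{24}) of ${}^{C}g$ and ${}^{C}g^{-1}$, lists the non-vanishing Christoffel symbols of ${}^{C}g$ for the specific metric (\ref{1}) in (\ref{255}), and then appeals to ``straightforward but long computations'' to reduce the trace conditions to (\ref{18}). You instead argue structurally, for an \emph{arbitrary} base metric: the block inversion ${}^{C}g^{-1}=\left(\begin{smallmatrix} 0 & g^{-1}\\ g^{-1} & -g^{-1}(u^{l}\partial _{l}g)g^{-1}\end{smallmatrix}\right)$ (which is correct, and agrees with (\ref{24}) when specialized to GK-S), combined with the general coefficients ${}^{C}\Gamma _{ij}^{k}={}^{C}\Gamma _{i\bar{j}}^{\bar{k}}={}^{C}\Gamma _{\bar{i}j}^{\bar{k}}=\Gamma _{ij}^{k}$ and ${}^{C}\Gamma _{ij}^{\bar{k}}=u^{l}\partial _{l}\Gamma _{ij}^{k}$ recalled in the preliminaries, shows by pure block bookkeeping that the traces with unbarred upper index vanish identically, while those with barred upper index equal $2\,\mathrm{tr}(g^{-1}(\widehat{\Gamma }^{k}-\Gamma ^{k}))$. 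Hence ${}^{C}\widehat{g}$ is harmonic with respect to ${}^{C}g$ if and only if $\widehat{g}$ is harmonic with respect to $g$, for \emph{any} pair of metrics on \emph{any} base manifold; specializing via the Section 3 theorem then yields (\ref{222}). Your route buys generality and robustness: none of the explicit lifted Christoffel symbols are ever needed, and indeed a couple of entries in (\ref{255}) (for instance ${}^{C}\Gamma _{12}^{\bar{2}}$, which by the general formula should be $u^{1}(XX^{\prime \prime }-(X^{\prime })^{2})/X^{2}$, and ${}^{C}\Gamma _{\bar{4}4}^{\bar{3}}$, which should be $-ff^{\prime }$) appear to carry sign slips relative to those formulas --- slips that cannot affect your argument, since those terms are contracted against vanishing blocks. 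What the paper's route buys is the explicit connection data of the lifted GK-S metric, which may have independent interest, at the price of a long, unchecked, and error-prone calculation.
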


\begin{proof}
If $(M,g)$ be a GK-S spacetime with the metric defined in (\ref{1}), then
from (\ref{15}), for the complete lift metric of the metric $g$ with respect
to the local coordinates $(x^{i},u^{i}),\ i=1,\cdot \cdot \cdot ,4,$ we have%
\begin{equation}
\tiny
^{C}{}g=\left( 
\begin{array}{cccccccc}
0 & 0 & 0 & 0 & 1 & 0 & 0 & 0 \\ 
0 & -2u^{1}XX^{\prime } & 0 & 0 & 0 & -X^{2} & 0 & 0 \\ 
0 & 0 & -2u^{1}YY^{\prime } & 0 & 0 & 0 & -Y^{2} & 0 \\ 
0 & 0 & 0 & -(2u^{1}YY^{\prime }f^{2}+2u^{3}Y^{2}ff^{\prime }) & 0 & 0 & 0 & 
-Y^{2}f^{2} \\ 
1 & 0 & 0 & 0 & 0 & 0 & 0 & 0 \\ 
0 & -X^{2} & 0 & 0 & 0 & 0 & 0 & 0 \\ 
0 & 0 & -Y^{2} & 0 & 0 & 0 & 0 & 0 \\ 
0 & 0 & 0 & -Y^{2}f^{2} & 0 & 0 & 0 & 0%
\end{array}%
\right) ,  \label{23}
\end{equation}%
\begin{equation}
\tiny
^{C}g{}^{-1}=\left( 
\begin{array}{cccccccc}
0 & 0 & 0 & 0 & 1 & 0 & 0 & 0 \\ 
0 & 0 & 0 & 0 & 0 & -\frac{1}{X^{2}} & 0 & 0 \\ 
0 & 0 & 0 & 0 & 0 & 0 & -\frac{1}{Y^{2}} & 0 \\ 
0 & 0 & 0 & 0 & 0 & 0 & 0 & -\frac{1}{Y^{2}f^{2}} \\ 
1 & 0 & 0 & 0 & 0 & 0 & 0 & 0 \\ 
0 & -\frac{1}{X^{2}} & 0 & 0 & 0 & \frac{2u^{1}X^{\prime }}{X^{3}} & 0 & 0
\\ 
0 & 0 & -\frac{1}{Y^{2}} & 0 & 0 & 0 & \frac{2u^{1}Y^{\prime }}{Y^{3}} & 0
\\ 
0 & 0 & 0 & -\frac{1}{Y^{2}f^{2}} & 0 & 0 & 0 & \frac{2(u^{1}fY^{\prime
}+u^{3}f^{\prime }Y)}{Y^{3}f^{3}}%
\end{array}%
\right) .  \label{24}
\end{equation}%
The non-zero components of the Christoffel symbols of the metric $^{C}g$
are obtained by direct calculations as follows:%
\begin{eqnarray}
^{C}\Gamma _{22}^{1} &=&XX^{\prime },\quad ^{C}\Gamma _{33}^{1}=Y^{\prime }Y,\quad ^{C}\Gamma
_{44}^{1}=YY^{\prime }f^{2},  \label{255} \\
^{C}\Gamma _{12}^{2} &=&\frac{X^{\prime }}{X},  \notag \\
^{C}\Gamma _{13}^{3} &=&\frac{Y^{\prime }}{Y},\quad ^{C}\Gamma _{44}^{3}=-ff^{\prime }, 
\notag \\
^{C}\Gamma _{14}^{4} &=&\frac{Y^{\prime }}{Y},\quad ^{C}\Gamma _{34}^{4}=\frac{f^{\prime }%
}{f},  \notag \\
^{C}\Gamma _{22}^{\bar{1}} &=&u^{1}(XX^{\prime \prime }+(X^{\prime
})^{2}),\quad^{C}\Gamma _{33}^{\bar{1}}=u^{1}(YY^{\prime \prime }+(Y^{\prime })^{2}),
\notag \\
^{C}\Gamma _{44}^{\bar{1}} &=&f(u^{1}f(Y^{\prime })^{2}+2u^{3}YY^{\prime
}f^{\prime }+u^{1}YfY^{\prime \prime }),  \notag \\
^{C}\Gamma _{12}^{\overline{2}} &=&\frac{u^{1}(XX^{\prime \prime }+(X^{\prime
})^{2})}{X^{2}},\quad ^{C}\Gamma _{\overline{1}2}^{\overline{2}}=^{C}\Gamma _{\overline{2}%
1}^{\overline{2}}=\frac{X^{\prime }}{X},  \notag
\end{eqnarray}%
\begin{eqnarray}
^{C}\Gamma _{13}^{\overline{3}} &=&\frac{u^{1}(YY^{\prime \prime }-(Y^{\prime
})^{2})}{Y^{2}},\quad^{C}\Gamma _{\overline{1}3}^{\bar{3}}=^{C}\Gamma _{\overline{3}1}^{%
\overline{3}}=\frac{Y^{\prime }}{Y},  \notag \\
^{C}\Gamma _{44}^{\overline{3}} &=&-u^{3}((f^{\prime })^{2}+ff^{\prime \prime
}),\quad ^{C}\Gamma _{\overline{4}4}^{\overline{3}}=ff^{\prime },  \notag \\
^{C}\Gamma _{14}^{\overline{4}} &=&\frac{u^{1}(YY^{\prime \prime }-(Y^{\prime
})^{2})}{Y^{2}},  \notag \\
\quad ^{C}\Gamma _{34}^{\overline{4}} &=&\frac{u^{3}(ff^{\prime \prime }-(f^{\prime
})^{2})}{f^{2}},  \notag \\
^{C}\Gamma _{\overline{1}4}^{\overline{4}} &=&\frac{Y^{\prime }}{Y},\ ^{C}\Gamma _{%
\overline{3}4}^{\overline{4}}=\frac{f^{\prime }}{f}.  \notag
\end{eqnarray}%
The Christoffel symbols of the metric $^{C}$ $\widehat{g}$ are occured when
the changes $X\rightarrow \widehat{X},\ Y\rightarrow \widehat{Y}\ $and $%
f\rightarrow \widehat{f}$ are done in (\ref{255}). Taking into account
equation (\ref{4}), $^{C}$ $\widehat{g}$ is harmonic with respect to $^{C}{}g
$ if and only if 
\begin{equation*}
\text{tr}(^{C}g^{-1}(^{C}\widehat{\Gamma }^{k}-^{C}\Gamma ^{k}))=0,\ k=1,\cdot
\cdot \cdot ,4,\bar{1},\cdot \cdot \cdot ,\bar{4}.
\end{equation*}%
Straightforward but long computations give us the harmonicity conditions as
follows:%
\begin{eqnarray*}
\frac{1}{X^{2}}(\widehat{X}^{\prime }\widehat{X}-X^{\prime }X)+\frac{1}{%
Y^{2}}(\widehat{Y}^{\prime }\widehat{Y}-Y^{\prime }Y)+\frac{1}{Y^{2}f^{2}}(%
\widehat{Y}^{\prime }\widehat{Y}\widehat{f}^{2}-Y^{\prime }Yf^{2}) &=&0, \\
-\frac{1}{Y^{2}f^{2}}(\widehat{f}\widehat{f}^{\prime }-ff^{\prime }) &=&0.
\end{eqnarray*}%
This completes the proof of the final theorem of the paper.
\end{proof}

\subsection{Data availability}

Data sharing not applicable to this article as no datasets were generated or
analysed during the current study.

\end{document}